\newtheorem{theorem}{Theorem}
\newtheorem{lemma}{Lemma}
\newtheorem{remark}{Remark}
\begin{document}
\baselineskip=17pt

\title{\bf On the number of pairs of positive integers
$\mathbf{x, y \leq H}$ such that $\mathbf{x^2+y^2+1}$, $\mathbf{x^2+y^2+2}$
are square-free}
\author{\bf S. I. Dimitrov}

\date{}
\maketitle

\begin{abstract}

In the present paper we show that there exist infinitely many
consecutive square-free numbers of the form
$x^2+y^2+1$, $x^2+y^2+2$.
We also establish an asymptotic formula for the number of pairs of
positive integers $x, y \leq H$ such that $x^2+y^2+1$, $x^2+y^2+2$
are square-free.\\
\textbf{Keywords}: Square-free numbers, Asymptotic formula, Gauss sums.\\
{\bf  2010 Math.\ Subject Classification}:  11L05 $\cdot$ 11N25 $\cdot$  11N37
\end{abstract}

\section{Notations}
\indent

Let $H$ be a sufficiently large positive number.
By $\varepsilon$ we denote an arbitrary small positive number, not the same in all appearances.
The letters $d, h, k, l, q, r, x, y$ with or without subscript will denote positive integers.
By the letters $D_1, D_2$, $H_0$ and $t$ we denote real numbers and by $m, n$ -- integers.
As usual $\mu(n)$ is M\"{o}bius' function and $\tau(n)$ denotes the number of positive divisors of $n$.
Further $[t]$ and $\{t\}$ denote the integer part, respectively, the fractional part of $t$.
Instead of $m\equiv n\,\pmod {d}$ we write for simplicity $m\equiv n\,(d)$.
Moreover $(l,m)$ is the greatest common divisor of $l$ and $m$,
and $(l,m,n)$ is the greatest common divisor of $l$, $m$ and $n$.
The letter $p$  will always denote prime number.
We put $\psi(t)=\{t\}-1/2$. As usual $e(t)$=exp($2\pi it$).
For any odd $q$ we denote by $\left(\frac{\cdot}{q}\right)$  the Jacobi symbol.
For any $n$ and $q$ such that $(n, q)=1$ we denote by $\overline{(n)}_q$
the inverse of $n$ modulo $q$. If the value of the modulus is understood from the
context then we write for simplicity $\bar{n}$.

By $G(q,m,n)$ and $G(q,m)$ we shall denote the Gauss sums
\begin{equation}\label{Gausssums}
G(q,m,n)=\sum\limits_{x=1}^{q}e\left(\frac{mx^2+nx}{q}\right)\,,\quad
G(q,m)=G(q,m,0)\,.
\end{equation}
By $K(q,m,n)$ we shall denote the Kloosterman sum
\begin{equation}\label{Kloosterman}
K(q,m,n)=\sum\limits_{x=1\atop{(x, q)=1}}^{q}e\left(\frac{mx+n\bar{x}}{q}\right)\,.
\end{equation}

\section{Introduction and statement of the result}
\indent

The problem for the consecutive square-free numbers arises in 1932 when Carlitz \cite{Carlitz} proved that
\begin{equation}\label{Carlitz}
\sum\limits_{n\leq H}\mu^2(n)\mu^2(n+1)=\prod\limits_{p}\left(1-\frac{2}{p^2}\right)H
+\mathcal{O}\big(H^{\theta+\varepsilon}\big)\,,
\end{equation}
where $\theta=2/3$.
Formula \eqref{Carlitz} was subsequently improved by Heath-Brown
\cite{Heath-Brown} to $\theta=7/11$ and by Reuss \cite{Reuss} to $\theta=(26+\sqrt{433})/81$.

In 2018 the author \cite{Dimitrov1} showed that for any fixed $1<c<22/13$ there exist infinitely many
consecutive square-free numbers of the form $[n^c], [n^c]+1$.

Recently the author \cite{Dimitrov2} proved that there exist infinitely many
consecutive square-free numbers of the form $[\alpha n]$, $[\alpha n]+1$,
where $n$ is natural and $\alpha>1$ is irrational number with bounded partial quotient
or irrational algebraic number.

Also recently the author \cite{Dimitrov3} showed that there exist infinitely many
consecutive square-free numbers of the form $[\alpha p]$, $[\alpha p]+1$,
where $p$ is prime and $\alpha>0$ is irrational algebraic number.

On the other hand in 2012 Tolev \cite{Tolev2} proved ingeniously that
there exist infinitely many square-free numbers of the form $x^2+y^2+1$.
More precisely he established the asymptotic formula
\begin{equation*}
\sum\limits_{1\leq x, y\leq H}\mu^2(x^2+y^2+1)
=cH^2+\mathcal{O}\left(H^{\frac{4}{3}+\varepsilon}\right)\,,
\end{equation*}
where
\begin{equation*}
c=\prod\limits_{p}\left(1-\frac{\lambda(p^2)}{p^4}\right)
\end{equation*}
and
\begin{equation*}
\lambda(q)=\sum\limits_{1\leq x, y\leq q\atop{x^2+y^2+1\equiv 0\,(q)}}1\,.
\end{equation*}
Define
\begin{equation}\label{GammaH}
\Gamma(H)=\sum\limits_{1\leq x, y\leq H}\mu^2(x^2+y^2+1)\,\mu^2(x^2+y^2+2)
\end{equation}
and
\begin{equation}\label{lambdaq1q2mn}
\lambda(q_1, q_2, m, n)=\sum\limits_{x, y\,:\,\eqref{conditions}}e\left(\frac{m x+n y}{q_1 q_2}\right)\,,
\end{equation}
where the summation is taken over the integers $x, y$ satisfying the conditions
\begin{equation}\label{conditions}
\left|\begin{array}{ccc}
1\leq x, y\leq q_1q_2\\
x^2+y^2+1\equiv 0\,(q_1)\\
x^2+y^2+2\equiv 0\,(q_2)
\end{array}\right..
\end{equation}
We define also
\begin{equation}\label{lambdaq1q2}
\lambda(q_1, q_2)=\lambda(q_1, q_2, 0, 0)\,.
\end{equation}

Motivated by these results and following the method of Tolev \cite{Tolev2}
we shall prove the following theorem
\begin{theorem}\label{Mytheorem}  For the sum $\Gamma(H)$  defined by \eqref{GammaH} the asymptotic formula
\begin{equation}\label{asymptoticformula}
\Gamma(H)=\sigma H^2+\mathcal{O}\left(H^{\frac{8}{5}+\varepsilon}\right)
\end{equation}
holds. Here
\begin{equation}\label{sigmaproduct}
\sigma=\prod\limits_{p}\left(1-\frac{\lambda(p^2, 1)+\lambda(1, p^2)}{p^4}\right)\,.
\end{equation}
\end{theorem}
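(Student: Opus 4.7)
The plan is to apply the standard identity $\mu^2(n)=\sum_{d^2 \mid n}\mu(d)$ to both square-free indicators in \eqref{GammaH} and exchange the order of summation, producing
\begin{equation*}
\Gamma(H)=\sum_{d_1,d_2}\mu(d_1)\mu(d_2)\,N(H;d_1,d_2),
\end{equation*}
where $N(H;d_1,d_2)$ counts the pairs $1\leq x,y\leq H$ satisfying $d_1^2 \mid x^2+y^2+1$ and $d_2^2 \mid x^2+y^2+2$. A common prime divisor of $d_1,d_2$ would have to divide the difference $1$, so $(d_1,d_2)=1$ is automatic, and the Chinese Remainder Theorem fuses the two congruences into a single condition modulo $q:=d_1^2 d_2^2$ with exactly $\lambda(d_1^2,d_2^2)=\lambda(d_1^2,1)\,\lambda(1,d_2^2)$ admissible residue pairs per period; this multiplicative decomposition is what eventually furnishes the Euler product \eqref{sigmaproduct}.

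I would then introduce truncation parameters $D_1,D_2$ to be chosen later and split $(d_1,d_2)$ into four ranges according to whether each $d_i\leq D_i$ or $d_i>D_i$. In the principal range the trivial lattice-point count yields
\begin{equation*}
N(H;d_1,d_2)=\frac{\lambda(d_1^2,d_2^2)}{(d_1 d_2)^4}\,H^2+E(H;d_1,d_2),
\end{equation*}
with the boundary error $E$ coming from $\psi$-function corrections at the edges of $[1,H]^2$. Summing the leading term over all coprime square-free $(d_1,d_2)$ and completing to infinity produces $\sigma H^2$; the completion tail is controlled by the elementary bounds $\lambda(p^2,1),\lambda(1,p^2)\ll p^2$.

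In the complementary (tail) ranges I would follow Tolev \cite{Tolev2} and express each divisibility indicator by orthogonality as
\begin{equation*}
\mathbf{1}[d_i^2 \mid x^2+y^2+i]=\frac{1}{d_i^2}\sum_{a_i=1}^{d_i^2}e\!\left(\frac{a_i(x^2+y^2+i)}{d_i^2}\right),
\end{equation*}
and then fuse the two additive characters via CRT into a single exponential sum modulo $q$. The inner sum over $x$ (and separately over $y$) is a shifted quadratic exponential sum which, after Fourier-completion of the interval $[1,H]$ using $\psi$ with Fourier coefficients $\ll 1/|k|$, becomes a full Gauss sum $G(q,m,n)$ as in \eqref{Gausssums}; after summing over $y$ and one of the additive parameters, the linear shifts coalesce into Kloosterman sums $K(q,m,n)$ of the form \eqref{Kloosterman}. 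At this point one invokes the classical evaluation $|G(q,m)|\ll q^{1/2}(q,m)^{1/2}$ together with Weil's bound $|K(q,m,n)|\ll q^{1/2+\varepsilon}(q,m,n)^{1/2}$, carefully isolating the zero-frequency contributions that partially feed back into the main term.

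The main technical obstacle will be the simultaneous optimisation of the various error contributions: the main-range boundary error scales like a power of $D_1 D_2$ times $H$, whereas the three tail ranges (only $d_1$ large, only $d_2$ large, and both large) yield different monomials in $H, D_1, D_2$ after the Gauss/Kloosterman bounds have been applied. Balancing all four contributions by a judicious choice of $D_1,D_2$ is precisely what pins the exponent in the error down to $8/5$ in \eqref{asymptoticformula}. Once the parameters are fixed, assembling the main term from the completed double sum and verifying that its local densities at each prime reproduce \eqref{sigmaproduct} is routine.
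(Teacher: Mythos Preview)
Your outline reverses the roles of the two ranges, and this is a genuine gap rather than a cosmetic difference.

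In the paper the Gauss--Kloosterman machinery is applied to the \emph{small} range $d_1d_2\le z$, not to the tail. The reason is that the ``trivial lattice-point count'' you invoke for the principal range gives an error of size $\lambda(d_1^2,d_2^2)\bigl(H(d_1d_2)^{-2}+1\bigr)$, and with $\lambda(q_1,q_2)\ll (q_1q_2)^{1+\varepsilon}$ this is $H^{1+\varepsilon}+(d_1d_2)^{2+\varepsilon}$. Summed over $d_1d_2\le z$ the first piece already contributes $H^{1+\varepsilon}z$, which for the optimal $z=H^{4/5}$ is $H^{9/5}$, strictly worse than the claimed $H^{8/5}$. To reach $H^{8/5}$ one must Fourier-expand the $\psi$-corrections, recognise the resulting sums as $\lambda(q_1,q_2,m,n)$, and then invoke the Gauss/Kloosterman bound $|\lambda(q_1,q_2,m,n)|\ll (q_1q_2)^{1/2+\varepsilon}(q_1q_2,m,n)^{1/2}$; this brings the boundary error down to $H^{\varepsilon}(d_1d_2)$, whose sum over $d_1d_2\le z$ is $z^2=H^{8/5}$.

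Conversely, the tail $d_1d_2>z$ \emph{cannot} be treated by Gauss/Kloosterman bounds. The error term $(q_1q_2)^{1/2}=d_1d_2$ per pair, summed over the full tail (where $d_1d_2$ runs up to about $\sqrt{2}\,H$), is of order $H^{2}$ and swamps the main term. The paper instead uses an elementary divisor argument: for fixed $d_1$ and $k$ with $x^2+y^2+1=kd_1^2$, the second condition forces $kd_1^2+1=ld_2^2$, so the pair $(l,d_2^2)$ runs over complementary divisors of the fixed integer $kd_1^2+1$ and is counted by $\tau(kd_1^2+1)\ll H^{\varepsilon}$. After a dyadic split $D_1\le d_1<2D_1$, $D_2\le d_2<2D_2$ with $D_1D_2\gg z$, this and its symmetric counterpart give $\Gamma_2(H)\ll H^{2+\varepsilon}\min(D_1^{-1},D_2^{-1})\ll H^{2+\varepsilon}z^{-1/2}$. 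Balancing $z^2$ against $H^{2}z^{-1/2}$ yields $z=H^{4/5}$ and the exponent $8/5$. Note also that a single cut-off on the product $d_1d_2$ suffices; there is no need for two independent parameters $D_1,D_2$.
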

From Theorem \ref{Mytheorem} it follows that there exist infinitely many
consecutive square-free numbers of the form $x^2+y^2+1$, $x^2+y^2+2$.

\section{Lemmas}
\indent

The first lemma we need gives us the basic properties of the Gauss sum.

\begin{lemma}\label{Gausslemma}
For the Gauss sum we have

\emph{(i)} \quad If $(q_1,q_2)=1$ then
\begin{equation*}
G(q_1 q_2, m_1 q_2+m_2 q_1, n)=G(q_1, m_1 q_2^2, n)\,G(q_2, m_2 q_1^2, n)\,.
\end{equation*}
\quad \emph{(ii)} \quad If $(q,m)=d$ then
\begin{equation*}
G(q, m ,n)=\begin{cases}d\,G\left(q/d, m/d, n/d\right) \;\text{ if }\; d\mid{n}\,,\\
0 \quad\quad\quad\quad\quad\quad\quad\quad \mbox{ if } \; d\nmid{n}\,.
\end{cases}
\end{equation*}
\;\;\;\emph{(iii)} \quad If $(q ,2m)=1$ then
\begin{equation*}
G(q, m, n)=e\left(\frac{-\overline{(4m)}\,n^2}{q}\right)\left(\frac{m}{q}\right)G(q,1)\,.
\end{equation*}
\;\, \emph{(iv)} \quad If $(q, 2)=1$ then
\begin{equation*}
G^2(q, 1)=(-1)^{\frac{q-1}{2}} q\,.
\end{equation*}
\end{lemma}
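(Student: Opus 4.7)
The plan is to prove the four parts in order, since each subsequent part builds on ideas from the previous one. These are all classical facts about quadratic Gauss sums, so in an actual paper I might just cite a textbook such as Iwaniec--Kowalski, but here is how I would verify each one.

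For part (i), I would use the Chinese Remainder Theorem to reparametrize the summation. Since $(q_1,q_2)=1$, as $x_1$ runs over a complete residue system mod $q_1$ and $x_2$ over one mod $q_2$, the combination $x=x_1q_2+x_2q_1$ runs over a complete residue system mod $q_1q_2$. Substituting into the exponent, the cross terms become integer-valued and hence disappear after exponentiation, and the exponent splits as
\begin{equation*}
\frac{(m_1q_2+m_2q_1)x^2+nx}{q_1q_2}\equiv \frac{m_1q_2^2 x_1^2+nx_1}{q_1}+\frac{m_2q_1^2 x_2^2+nx_2}{q_2}\pmod 1,
\end{equation*}
so the double sum factors as claimed. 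For part (ii), I would write $q=dq'$, $m=dm'$ and split $x=a+q'b$ with $1\le a\le q'$, $0\le b\le d-1$. Then $mx^2/q=m'a^2/q'$ only depends on $a$, while the linear term produces $e(nb/d)$; summing over $b$ gives $d$ when $d\mid n$ and zero otherwise, and the remaining sum over $a$ is exactly $G(q/d,m/d,n/d)$ when $d\mid n$.

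For part (iii), the idea is to complete the square: when $(q,2m)=1$ the inverse $\overline{(2m)}$ exists mod $q$, and
\begin{equation*}
mx^2+nx\equiv m\bigl(x+\overline{(2m)}\,n\bigr)^2-\overline{(4m)}\,n^2\pmod q .
\end{equation*}
Translating the variable of summation reduces $G(q,m,n)$ to $e(-\overline{(4m)}n^2/q)\,G(q,m)$. It then remains to show $G(q,m)=\bigl(\frac{m}{q}\bigr)G(q,1)$ for odd $q$ with $(m,q)=1$. For $q$ prime one uses the substitution $x\mapsto\bar m\,x$ after counting square roots via $1+\bigl(\frac{\cdot}{q}\bigr)$, getting the standard identity, and the extension to odd composite $q$ follows from multiplicativity (part (i)) combined with the multiplicativity of the Jacobi symbol.

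For part (iv), for prime $q=p$ this is Gauss's celebrated evaluation $G(p,1)^2=(-1)^{(p-1)/2}p$, which I would take as known. To pass to general odd $q$, I would apply part (i) in the form $G(q_1q_2,1)=G(q_1,q_2)G(q_2,q_1)$ (choosing CRT representatives of $1$), then apply part (iii) to each factor to get
\begin{equation*}
G(q_1q_2,1)=\left(\frac{q_2}{q_1}\right)\left(\frac{q_1}{q_2}\right)G(q_1,1)\,G(q_2,1),
\end{equation*}
square both sides, and use quadratic reciprocity together with the elementary identity $(q_1-1)/2+(q_2-1)/2\equiv (q_1q_2-1)/2\pmod 2$ to check that the signs match. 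This reduces (iv) to the case of prime powers, which is handled by iterating the splitting $x=y+p^{k-1}z$ to peel off one factor of $p$ at a time. The only genuine obstacle is the prime-case Gauss evaluation itself, which is classical and I would simply quote.
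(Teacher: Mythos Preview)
The paper does not actually prove this lemma: its entire ``proof'' is the single line ``See \cite{Estermann} and \cite{Hua}.'' These are classical properties of quadratic Gauss sums, and the author simply quotes them. Your own instinct to cite a standard reference was therefore exactly right, and the detailed sketches you give go considerably beyond what the paper itself supplies.

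Your arguments are correct. One small point worth tightening: in part~(iii), the multiplicativity from~(i) reduces the case of a general odd modulus to \emph{prime powers}, not to primes, so strictly speaking you still owe the identity $G(p^k,m)=\bigl(\tfrac{m}{p^k}\bigr)G(p^k,1)$ for $k\ge 2$. But this follows at once from the same recursion $G(p^k,\cdot)=p\,G(p^{k-2},\cdot)$ that you derive via the splitting $x=y+p^{k-1}z$ in part~(iv), so no new idea is needed and the argument closes.
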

\begin{proof}
See \cite{Estermann} and \cite{Hua}.
\end{proof}

The next lemma gives us A. Weil's estimate for the Kloosterman sum.

\begin{lemma}\label{Weilsestimate}
\begin{equation*}
|K(q,m,n)|\leq \tau(q)\,q^{\frac{1}{2}}\,(q,m,n)^{\frac{1}{2}}\,.
\end{equation*}
\end{lemma}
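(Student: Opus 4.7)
The plan is to reduce the bound to A.\ Weil's celebrated estimate on Kloosterman sums for prime modulus, assembling the general case from elementary multiplicativity. First I would establish the twisted multiplicativity of Kloosterman sums: if $q = q_1 q_2$ with $(q_1, q_2) = 1$, then the Chinese Remainder substitution $x \equiv q_2 \overline{q_2} x_1 + q_1 \overline{q_1} x_2 \pmod{q_1 q_2}$ yields
\[
K(q_1 q_2, m, n) = K\bigl(q_1, m, n\, \overline{q_2^2}\bigr)\cdot K\bigl(q_2, m, n\, \overline{q_1^2}\bigr),
\]
which reduces the problem to bounding $|K(p^k, m, n)|$ on each prime power $p^k \| q$.

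Next I would invoke, in the case $k = 1$ with $(mn, p) = 1$, Weil's theorem $|K(p, m, n)| \leq 2 p^{1/2}$. This is the deep step and the chief obstacle: its proof rests on the Riemann Hypothesis for the zeta function of the associated hyperelliptic curve over $\mathbb{F}_p$ (equivalently, Deligne's interpretation via the Kloosterman sheaf), and I would simply quote it rather than reprove it. For prime powers $p^k$ with $k \geq 2$ no non-elementary input is required: a completing-the-square argument together with Hensel lifting evaluates $K(p^k, m, n)$ in closed form and produces the pointwise bound $|K(p^k, m, n)| \leq 2\, p^{k/2}\, (p^k, m, n)^{1/2}$.

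Finally, the degenerate case in which $p$ divides both $m$ and $n$ will be handled by extracting $p$ from the phase, which shrinks the modulus and accounts for the factor $(q, m, n)^{1/2}$ in the bound. Assembling by multiplicativity over the factorization $q = \prod_p p^{a_p}$, the constants $2^{\omega(q)}$ coming from the individual prime-power estimates are absorbed into $\tau(q)$, yielding the stated inequality. The only genuine content beyond bookkeeping is Weil's bound itself.
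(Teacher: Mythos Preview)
Your sketch is correct and follows the standard route to the Weil bound---twisted multiplicativity to reduce to prime powers, the deep input $|K(p,m,n)|\le 2\sqrt{p}$ from the Riemann Hypothesis for curves over $\mathbb{F}_p$, the elementary stationary-phase evaluation for $p^k$ with $k\ge 2$, and the extraction of common factors to produce $(q,m,n)^{1/2}$. The paper, however, does not prove this lemma at all: its entire proof is the citation ``See \cite{Iwaniec}'' (Iwaniec--Kowalski, \emph{Analytic Number Theory}), where precisely the argument you outline is carried out. So your proposal is not a different approach so much as an unpacking of the reference the paper invokes; for the purposes of this paper nothing more than the citation is needed, since the Kloosterman bound is classical and used here only as a black box inside Lemma~\ref{Mylemma1}.
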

\begin{proof}
See \cite{Iwaniec}.
\end{proof}

The next lemma is the central moment in the proof of Theorem \ref{Mytheorem}.
Right here we apply the  properties of the Gauss sum and A.Weil's estimate for the Kloosterman sum.
\begin{lemma}\label{Mylemma1}
Let $8\nmid q_1$, $8\nmid q_2$ and $(q_1, q_2)=1$.
Then for function defined by \eqref{lambdaq1q2mn} the estimation
\begin{equation}\label{Myestimate1}
|\lambda(q_1, q_2, m, n)|\leq 16\,\tau^2(q_1 q_2)\,(q_1 q_2)^{\frac{1}{2}}\,
(q_1 q_2,m,n)^{\frac{1}{2}}
\end{equation}
holds.
In particular we have
\begin{equation}\label{Myestimate2}
\lambda(q_1, q_2)\ll(q_1 q_2)^{1+\varepsilon}\,.
\end{equation}
\end{lemma}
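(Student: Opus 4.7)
The plan is to open the two congruences in \eqref{conditions} by additive orthogonality, factor the resulting exponential sums through Lemma \ref{Gausslemma}, and then invoke A.\,Weil's bound from Lemma \ref{Weilsestimate}. First I would write
\begin{equation*}
\mathbf{1}[x^2+y^2+1\equiv 0\,(q_1)] \;=\; \frac{1}{q_1}\sum_{a_1=1}^{q_1} e\!\left(\frac{a_1(x^2+y^2+1)}{q_1}\right)\,,
\end{equation*}
and likewise for the congruence mod $q_2$. Inserting these into \eqref{lambdaq1q2mn} and merging both exponents over the common modulus $q_1 q_2$ (legitimate since $(q_1,q_2)=1$), the $x$- and $y$-summations decouple and each becomes a Gauss sum of type \eqref{Gausssums}. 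Lemma \ref{Gausslemma}(i) then splits each of these Gauss sums across $q_1$ and $q_2$, producing a factorisation
\begin{equation*}
\lambda(q_1,q_2,m,n) \;=\; \frac{1}{q_1 q_2}\,A(q_1)\,A(q_2)\,,
\end{equation*}
where
\begin{equation*}
A(q_j) \;:=\; \sum_{a_j=1}^{q_j} e\!\left(\frac{\epsilon_j a_j}{q_j}\right) G\!\left(q_j,\,a_j q_{3-j}^2,\,m\right)\,G\!\left(q_j,\,a_j q_{3-j}^2,\,n\right)\,,
\end{equation*}
with $\epsilon_1 = 1$ and $\epsilon_2 = 2$.

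Next I would bound each $A(q_j)$. Grouping $a_j$ according to $d_j := (a_j, q_j)$ and applying Lemma \ref{Gausslemma}(ii), the inner Gauss sums vanish unless $d_j \mid (m,n,q_j)$; otherwise they collapse to Gauss sums modulo $Q_j := q_j/d_j$ with argument coprime to $Q_j$. The assumption $8 \nmid q_j$ limits the $2$-part of $Q_j$ to at most $4$, so I would peel it off via another application of Lemma \ref{Gausslemma}(i), estimate the finitely many Gauss sums over powers of $2$ by hand (absorbing the finite loss into the constant $16$), and apply Lemma \ref{Gausslemma}(iii)--(iv) to the odd part $Q_j^{\ast}$. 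The Jacobi symbol produced by (iii) appears \emph{squared} in the product of the two Gauss sums and therefore cancels, while the two quadratic phases combine into $e\!\left(-\overline{(4 b q_{3-j}^2)}(M^2+N^2)/Q_j^{\ast}\right)$ with $M = m/d_j$, $N = n/d_j$. Summing over $b$ coprime to $Q_j^{\ast}$ now produces a Kloosterman sum $K(Q_j^{\ast},\,1,\,-c)$ with $c \equiv \overline{(4 q_{3-j}^2)}(M^2+N^2) \pmod{Q_j^{\ast}}$, to which Lemma \ref{Weilsestimate} applies, giving at most $\tau(Q_j^{\ast}) (Q_j^{\ast})^{1/2}$ since the gcd of its arguments with $Q_j^{\ast}$ is trivially $1$.

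Combining the factor $|G(Q_j^{\ast},1)|^2 = Q_j^{\ast}$ from Lemma \ref{Gausslemma}(iv), the reduction factor $d_j^2$, and Weil's bound, then summing $d_j$ over divisors of $(q_j,m,n)$, I would obtain a bound of shape $|A(q_j)| \ll q_j^{3/2}\,\tau^2(q_j)\,(q_j,m,n)^{1/2}$. Multiplying the $j=1,2$ bounds, dividing by $q_1 q_2$, and using the multiplicativity of $\tau$ and of $\gcd(\,\cdot\,,m,n)$ across the coprime factors $q_1,q_2$ yields \eqref{Myestimate1}. The specialisation \eqref{Myestimate2} then follows on setting $m=n=0$, since $(q_1 q_2, 0, 0) = q_1 q_2$ and $\tau^2(q_1 q_2) \ll (q_1 q_2)^{\varepsilon}$.

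The main obstacle I anticipate is the careful bookkeeping around the $2$-part of $q_j$: since Lemma \ref{Gausslemma}(iii)--(iv) apply only to odd moduli, the few cases $\alpha_j \in \{0,1,2\}$ with $q_j = 2^{\alpha_j} q_j^{\ast}$ must be disposed of separately, and it is precisely this step that forces (and allows) the explicit constant $16$ appearing in \eqref{Myestimate1}. A secondary concern is ensuring that the Jacobi symbols and quadratic phases collapse neatly enough for the resulting $b$-sum to be a genuine Kloosterman sum with an argument pair having gcd $1$ with the modulus, so that Lemma \ref{Weilsestimate} delivers full square-root savings rather than a weaker bound.
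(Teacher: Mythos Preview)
Your proposal is correct and follows essentially the same route as the paper: open the congruences by additive characters, factor into Gauss sums via Lemma~\ref{Gausslemma}(i), stratify by $(a_j,q_j)$, completing the square on the odd part via Lemma~\ref{Gausslemma}(iii)--(iv) to reach a Kloosterman sum, and finish with Lemma~\ref{Weilsestimate}. The only organizational difference is in the treatment of the $2$-part: the paper first establishes a twisted multiplicativity relation for $\lambda$ itself (its equation~\eqref{multiplicative}), uses it to split off $\lambda(2^h,1,\cdot,\cdot)$ or $\lambda(1,2^h,\cdot,\cdot)$ and bounds that factor trivially by $4^h\le16$, whereas you propose to peel the $2$-part off inside each $A(q_j)$ at the Gauss-sum level; both manoeuvres are equivalent and yield the same constant. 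One small slip: the Kloosterman sum arising for $j=2$ is $K(Q_2^\ast,2,\cdot)$ rather than $K(Q_2^\ast,1,\cdot)$, but since $Q_2^\ast$ is odd this does not affect the gcd and Weil's bound still gives full square-root cancellation.
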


\begin{remark}
An estimate of type \eqref{Myestimate1} is valid for any positive integers $q_1, q_2$.
We introduce the restrictions $8\nmid q_1$, $8\nmid q_2$ since in this case
the proof is slightly simpler and since in our work only such $q_1, q_2$ appear.
\end{remark}

\begin{proof}

\textbf{Case 1.} $2\nmid q_1 q_2$.

Using \eqref{Gausssums}, \eqref{lambdaq1q2mn}, \eqref{conditions} and Lemma \ref{Gausslemma}
we get
\begin{align}\label{lambdaq1q2mnest1}
&\lambda(q_1, q_2, m, n)=\nonumber\\
&=\frac{1}{q_1 q_2}\sum\limits_{1\leq x, y\leq q_1 q_2}e\left(\frac{mx+ny}{q_1q_2}\right)
\sum\limits_{1\leq h_1\leq q_1}e\left(\frac{h_1(x^2+y^2+1)}{q_1}\right)
\sum\limits_{1\leq h_2\leq q_2}e\left(\frac{h_2(x^2+y^2+2)}{q_2}\right)\nonumber\\
&=\frac{1}{q_1 q_2}\sum\limits_{1\leq h_1\leq q_1}e\left(\frac{h_1}{q_1}\right)
\sum\limits_{1\leq h_2\leq q_2}e\left(\frac{2h_2}{q_2}\right)
G(q_1 q_2, h_1 q_2+h_2 q_1, m)\, G(q_1 q_2, h_1 q_2+h_2 q_1, n)\nonumber\\
\end{align}

\begin{align*}
&=\frac{1}{q_1 q_2}\sum\limits_{1\leq h_1\leq q_1}e\left(\frac{h_1}{q_1}\right)\,
G(q_1, h_1 q^2_2, m)\, G(q_1, h_1 q^2_2, n)\nonumber\\
&\times\sum\limits_{1\leq h_2\leq q_2}e\left(\frac{2h_2}{q_2}\right)
G(q_2, h_2 q^2_1, m)\, G(q_2, h_2 q^2_1, n)\nonumber\\
&=\frac{1}{q_1 q_2}\sum\limits_{l_1 | q_1}\sum\limits_{1\leq h_1\leq q_1\atop{(h_1, q_1)=\frac{q_1}{l_1}}}
e\left(\frac{h_1}{q_1}\right)\,G(q_1, h_1 q^2_2, m)\, G(q_1, h_1 q^2_2, n)\nonumber\\
&\times\sum\limits_{l_2 | q_2}\sum\limits_{1\leq h_2\leq q_2\atop{(h_2, q_2)=\frac{q_2}{l_2}}}
e\left(\frac{2h_2}{q_2}\right)G(q_2, h_2 q^2_1, m)\, G(q_2, h_2 q^2_1, n)\,.
\end{align*}
Bearing in mind \eqref{Kloosterman}, \eqref{lambdaq1q2mnest1}, $2\nmid q_1 q_2$ and Lemma \ref{Gausslemma} we obtain
\begin{align}\label{lambdaq1q2mnest2}
\lambda(q_1, q_2, m, n)
&=q_1 q_2\sum\limits_{l_1 | q_1\atop{\frac{q_1}{l_1} | (m, n)}}\frac{1}{l^2_1}
\sum\limits_{1\leq r_1\leq l_1\atop{(r_1, l_1)=1}}
e\left(\frac{r_1}{l_1}\right)\,G(l_1, r_1 q^2_2, m l_1 q^{-1}_1)\, G(l_1, r_1 q^2_2, n l_1 q^{-1}_1)\nonumber\\
&\times\sum\limits_{l_2 | q_2\atop{\frac{q_2}{l_2} | (m, n)}}\frac{1}{l^2_2}
\sum\limits_{1\leq r_2\leq l_2\atop{(r_2, l_2)=1}}
e\left(\frac{2r_2}{l_2}\right)\,G(l_2, r_2 q^2_1, m l_2 q^{-1}_2)\, G(l_2, r_2 q^2_1, n l_2 q^{-1}_2)\nonumber\\
&=q_1 q_2\sum\limits_{l_1 | q_1\atop{\frac{q_1}{l_1} | (m, n)}}\frac{G^2(l_1, 1)}{l^2_1}
\sum\limits_{1\leq r_1\leq l_1\atop{(r_1, l_1)=1}}
e\left(\frac{r_1-\overline{(4 r_1 q^2_2)}(m^2+n^2)l^2_1 q^{-2}_1}{l_1}\right)\nonumber\\
&\times\sum\limits_{l_2 | q_2\atop{\frac{q_2}{l_2} | (m, n)}}\frac{G^2(l_2, 1)}{l^2_2}
\sum\limits_{1\leq r_2\leq l_2\atop{(r_2, l_2)=1}}
e\left(\frac{2r_2-\overline{(4 r_2 q^2_1)}(m^2+n^2)l^2_2 q^{-2}_2}{l_2}\right)\nonumber\\
&=q_1 q_2\sum\limits_{l_1 | q_1\atop{\frac{q_1}{l_1} | (m, n)}}\frac{(-1)^{\frac{l_1-1}{2}}}{l_1}
K(l_1, 1, \overline{4 q^2_2}(m^2+n^2)l^2_1 q^{-2}_1)\nonumber\\
&\times\sum\limits_{l_2 | q_2\atop{\frac{q_2}{l_2} | (m, n)}}\frac{(-1)^{\frac{l_2-1}{2}}}{l_2}
K(l_2, 2, \overline{4 q^2_1}(m^2+n^2)l^2_2 q^{-2}_2)\,.
\end{align}
From \eqref{lambdaq1q2mnest2} and Lemma \ref{Weilsestimate} it follows that
\begin{align}\label{lambdaq1q2mnest3}
|\lambda(q_1, q_2, m, n)|&\leq q_1 q_2\sum\limits_{l_1 | q_1\atop{\frac{q_1}{l_1} | (m, n)}}
\frac{\tau(l_1)}{l^{\frac{1}{2}}_1}\sum\limits_{l_2 | q_2\atop{\frac{q_2}{l_2} | (m, n)}}
\frac{\tau(l_2)}{l^{\frac{1}{2}}_2}\nonumber\\
&\leq q_1 q_2\, \tau(q_1 q_2)\sum\limits_{r_1 | (q_1, m, n)}q^{-\frac{1}{2}}_1 r^{\frac{1}{2}}_1
\sum\limits_{r_2 | (q_2, m, n)}q^{-\frac{1}{2}}_2 r^{\frac{1}{2}}_2\nonumber\\
&\leq \tau^2(q_1 q_2)\,(q_1 q_2)^{\frac{1}{2}} (q_1 q_2, m, n)^{\frac{1}{2}}\,.
\end{align}

\textbf{Case 2.} $q_1=2^h q'_1$, where $2\nmid q'_1$ and $h\leq2$, and $2\nmid q_2$.

The function $\lambda(q_1, q_2, m, n)$ defined by \eqref{lambdaq1q2mn} is such that,
if
\begin{equation*}
(q'_1 q''_1, q'_2 q''_2)=(q'_1, q''_1)=(q'_2, q''_2)=1
\end{equation*}
then
\begin{align}\label{multiplicative}
&\lambda(q'_1 q''_1, q'_2 q''_2, m, n)=\nonumber\\
&=\lambda\left(q'_1, q'_2, m \overline{(q''_1 q''_2)}_{q'_1 q'_2}, n \overline{(q''_1 q''_2)}_{q'_1 q'_2}\right)
\,\lambda\left(q''_1,q''_2, m \overline{(q'_1 q'_2)}_{q''_1 q''_2}, n \overline{(q'_1 q'_2)}_{q''_1 q''_2}\right)\,.
\end{align}
(Since the proof is elementary we skip the details and leave it to the reader.)

Using \eqref{lambdaq1q2mnest3}, \eqref{multiplicative} and the trivial estimate
$|\lambda(2^h, 1, m, n)|\leq4^h$ we find
\begin{align}\label{lambdaq1q2mnest4}
|\lambda(2^h q'_1, q_2, m, n)|
&=\left|\lambda\left(2^h, 1, m \overline{(q'_1 q_2)}_{2^h}, n \overline{(q'_1 q_2)}_{2^h}\right)
\,\lambda\left(q'_1, q_2, m \overline{(2^h)}_{q'_1 q_2}, n \overline{(2^h)}_{q'_1 q_2}\right)\right|\nonumber\\
&\leq 16\tau^2(q'_1 q_2)\,(q'_1 q_2)^{\frac{1}{2}} (q'_1 q_2, m, n)^{\frac{1}{2}}\nonumber\\
&\leq 16\tau^2(q_1 q_2)\,(q_1 q_2)^{\frac{1}{2}} (q_1 q_2, m, n)^{\frac{1}{2}}\,.
\end{align}

\textbf{Case 3.} $2\nmid q_1$ and $q_2=2^h q'_2$, where $2\nmid q'_2$ and $h\leq2$.

By \eqref{lambdaq1q2mnest3}, \eqref{multiplicative} and the trivial estimate
$|\lambda(1, 2^h, m, n)|\leq4^h$ we get
\begin{align}\label{lambdaq1q2mnest5}
|\lambda(q_1, 2^h q'_2, m, n)|
&=\left|\lambda\left(1, 2^h, m \overline{(q_1 q'_2)}_{2^h}, n \overline{(q_1 q'_2)}_{2^h}\right)
\,\lambda\left(q_1, q'_2, m \overline{(2^h)}_{q_1 q'_2}, n \overline{(2^h)}_{q_1 q'_2}\right)\right|\nonumber\\
&\leq 16\tau^2(q_1 q'_2)\,(q_1 q'_2)^{\frac{1}{2}} (q_1 q'_2, m, n)^{\frac{1}{2}}\nonumber\\
&\leq 16\tau^2(q_1 q_2)\,(q_1 q_2)^{\frac{1}{2}} (q_1 q_2, m, n)^{\frac{1}{2}}\,.
\end{align}

Now the estimation \eqref{Myestimate1} follows from
\eqref{lambdaq1q2mnest3}, \eqref{lambdaq1q2mnest4} and \eqref{lambdaq1q2mnest5}.

As a byproduct of \eqref{Myestimate1} we obtain \eqref{Myestimate2}.
\end{proof}

\begin{lemma}\label{Mylemma2}
Assume that $8\nmid q_1$, $8\nmid q_2$, $(q_1, q_2)=1$ and $H_0\geq2$. Then for the sums
\begin{equation}\label{Lambda12}
\Lambda_1=\sum\limits_{1\leq m\leq H_0}\frac{|\lambda(q_1, q_2, m, 0)|}{m}\,,\quad
\Lambda_2=\sum\limits_{1\leq m, n\leq H_0}\frac{|\lambda(q_1, q_2, m, n)|}{m n}
\end{equation}
the estimations
\begin{equation}\label{Lambda12est}
\Lambda_1\ll(q_1 q_2)^{\frac{1}{2}+\varepsilon} H^\varepsilon_0\,,\quad
\Lambda_2\ll(q_1 q_2)^{\frac{1}{2}+\varepsilon} H^\varepsilon_0
\end{equation}
hold.
\end{lemma}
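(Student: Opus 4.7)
The plan is to deduce both bounds directly from Lemma \ref{Mylemma1}, which gives
\[
|\lambda(q_1, q_2, m, n)|\ll \tau^2(q_1 q_2)\,(q_1 q_2)^{\frac{1}{2}}\,(q_1 q_2, m, n)^{\frac{1}{2}},
\]
and then to handle the remaining arithmetic sums over $m$ (resp.\ $m,n$) by the standard trick of grouping the summation variables according to the value of the gcd with $q_1 q_2$.

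For $\Lambda_1$, I would first observe that $(q_1 q_2, m, 0)=(q_1 q_2, m)$, so
\[
\Lambda_1\ll \tau^2(q_1 q_2)\,(q_1 q_2)^{\frac{1}{2}}
\sum_{1\leq m\leq H_0}\frac{(q_1 q_2, m)^{\frac{1}{2}}}{m}.
\]
Then I would split the inner sum according to $d=(q_1 q_2, m)$, write $m=dk$, and upper-bound by letting $d$ range over all divisors of $q_1 q_2$ and $k$ over $1\leq k\leq H_0/d$. This gives
\[
\sum_{1\leq m\leq H_0}\frac{(q_1 q_2, m)^{\frac{1}{2}}}{m}
\leq \sum_{d\mid q_1 q_2}\frac{1}{d^{\frac{1}{2}}}\sum_{k\leq H_0/d}\frac{1}{k}
\ll \tau(q_1 q_2)\log H_0\ll (q_1 q_2)^{\varepsilon}H_0^{\varepsilon},
\]
and combined with $\tau^2(q_1 q_2)\ll (q_1 q_2)^{\varepsilon}$ this yields the claimed bound on $\Lambda_1$.

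For $\Lambda_2$ the idea is the same but in two variables. After applying Lemma \ref{Mylemma1} I would reduce to estimating
\[
\sum_{1\leq m,n\leq H_0}\frac{(q_1 q_2, m, n)^{\frac{1}{2}}}{mn},
\]
and group according to $d=(q_1 q_2, m, n)$. Writing $m=dk$, $n=dl$ and again relaxing the gcd condition to mere divisibility, this is at most
\[
\sum_{d\mid q_1 q_2}\frac{d^{\frac{1}{2}}}{d^{2}}\Bigl(\sum_{k\leq H_0/d}\frac{1}{k}\Bigr)^{2}
\ll (\log H_0)^{2}\sum_{d\mid q_1 q_2}\frac{1}{d^{\frac{3}{2}}}\ll H_0^{\varepsilon},
\]
the last sum being uniformly bounded. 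Multiplying by $\tau^{2}(q_1 q_2)(q_1 q_2)^{1/2}\ll (q_1 q_2)^{1/2+\varepsilon}$ gives the second bound in \eqref{Lambda12est}.

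There is no real obstacle here: the only point requiring a touch of care is the reduction from the sharp condition $(q_1 q_2, m, n)=d$ to the relaxed condition $d\mid m$ and $d\mid n$, which gives away nothing since both sides are majorised by an $H_0^{\varepsilon}$ factor. Everything else is harmonic-sum bookkeeping and the absorption of $\tau(q_1 q_2)$ into $(q_1 q_2)^{\varepsilon}$.
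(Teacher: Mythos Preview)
Your proof is correct and follows the same strategy as the paper: apply Lemma~\ref{Mylemma1} and then control the gcd-weighted harmonic sums by grouping according to divisors of $q_1q_2$. The only cosmetic difference is in $\Lambda_2$, where the paper first bounds $(q_1q_2,m,n)^{1/2}\le(q_1q_2,m)^{1/2}(q_1q_2,n)^{1/2}$ to factor the double sum as $\Lambda_0^2$, while you group directly by $d=(q_1q_2,m,n)$ and land on the convergent sum $\sum_{d\mid q_1q_2}d^{-3/2}$; both routes are equally short and valid.
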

\begin{proof}
Using \eqref{Lambda12} and Lemma \ref{Mylemma1} we get
\begin{equation}\label{Lambda1est1}
\Lambda_1\ll(q_1 q_2)^{\frac{1}{2}+\varepsilon}
\sum\limits_{1\leq m\leq H_0}\frac{(q_1 q_2, m)^{\frac{1}{2}}}{m}
=(q_1 q_2)^{\frac{1}{2}+\varepsilon}\Lambda_0\,,
\end{equation}
where
\begin{equation*}
\Lambda_0=\sum\limits_{1\leq m\leq H_0}\frac{(q_1 q_2, m)^{\frac{1}{2}}}{m}\,.
\end{equation*}
We have
\begin{equation}\label{Lambda0est}
\Lambda_0\ll\sum\limits_{r | q_1 q_2}r^{\frac{1}{2}}\sum\limits_{m\leq H_0\atop{m\equiv0\,(r)}}\frac{1}{m}
\ll(\log H_0)\sum\limits_{r | q_1 q_2}r^{-\frac{1}{2}}\ll(q_1 q_2 H_0)^\varepsilon\,.
\end{equation}
From \eqref{Lambda1est1} and \eqref{Lambda0est} follows the first inequality in \eqref{Lambda12est}.

Using \eqref{Lambda12}, \eqref{Lambda0est} and Lemma \ref{Mylemma1} we obtain
\begin{align*}
\Lambda_2&\ll(q_1 q_2)^{\frac{1}{2}+\varepsilon}
\sum\limits_{1\leq m, n\leq H_0}\frac{(q_1 q_2, m, n)^{\frac{1}{2}}}{m n}
\ll(q_1 q_2)^{\frac{1}{2}+\varepsilon}
\sum\limits_{1\leq m, n\leq H_0}\frac{(q_1 q_2, m)^{\frac{1}{2}} (q_1 q_2, n)^{\frac{1}{2}}}{m n}\\
&=(q_1 q_2)^{\frac{1}{2}+\varepsilon}\Lambda^2_0\ll(q_1 q_2)^{\frac{1}{2}+\varepsilon} H^\varepsilon_0\,,
\end{align*}
which proves the second inequality in \eqref{Lambda12est}.
\end{proof}

The final lemma we need gives us important expansions.
\begin{lemma}\label{expansion}
For any $H_0\geq2$, we have
\begin{equation*}
\psi(t)=-\sum\limits_{1\leq|m|\leq H_0}\frac{e(mt)}{2\pi i m}
+\mathcal{O}\big(f(H_0, t)\big)\,,
\end{equation*}
where $f(H_0, t)$ is a positive, infinitely many times differentiable and periodic with
period 1 function of $t$. It can be expanded into Fourier series
\begin{equation*}
f(H_0, t)=\sum\limits_{m=-\infty}^{+\infty}b_{H_0}(m)e(m t)\,,
\end{equation*}
with coefficients $b_{H_0}(m)$ such that
\begin{equation*}
b_{H_0}(m)\ll\frac{\log H_0}{H_0}\quad \mbox{for all}\quad m
\end{equation*}
and
\begin{equation*}
\sum\limits_{|m|>H^{1+\varepsilon}_0}|b_{H_0}(m)|\ll H^{-A}_0\,.
\end{equation*}
Here $A > 0$ is arbitrarily large and the constant in the $\ll$ - symbol depends on $A$ and $\varepsilon$.
\begin{proof}
See \cite{Tolev1}.
\end{proof}
\end{lemma}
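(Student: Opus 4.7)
The plan is to first derive a pointwise approximation of $\psi$ by its truncated Fourier series, and then construct $f(H_0,t)$ as a smooth majorant of the resulting remainder whose Fourier coefficients can be controlled directly by the convolution identity.

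First I would start from the classical sawtooth expansion $\psi(t)=-\sum_{m=1}^\infty \sin(2\pi m t)/(\pi m)$, valid for every non-integer $t$, together with the standard Dirichlet-kernel estimate
\[
\psi(t)+\sum_{1\leq|m|\leq H_0}\frac{e(mt)}{2\pi i m}\;\ll\;\min\!\Bigl(1,\frac{1}{H_0\|t\|}\Bigr),
\]
where $\|t\|$ denotes the distance from $t$ to the nearest integer. The right-hand side already has $L^1$-mean of order $(\log H_0)/H_0$ over a period, which is exactly what the bound on $b_{H_0}(0)$ demands; the remaining task is to replace this (non-smooth) pointwise bound by a smooth majorant whose non-zero Fourier coefficients also decay rapidly.

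To that end I would fix $\delta=H_0^{-1-\varepsilon}$ and a non-negative $C^\infty$ bump $\rho_\delta$ supported in $[-\delta,\delta]$ with $\int\rho_\delta=1$ and $\|\rho_\delta^{(k)}\|_\infty\leq C_k\,\delta^{-k-1}$ for every $k\geq 0$. Let $g(t)$ denote the $1$-periodic function equal to $C\min(1,1/(H_0\|t\|))$ on $[-1/2,1/2]$, where the absolute constant $C$ is chosen large enough that $g$ dominates the pointwise remainder above. Define
\[
f(H_0,t)=(g*\rho_\delta)(t)
\]
as a periodic convolution on the circle. Then $f$ is by construction non-negative, $1$-periodic and infinitely differentiable, and (after a harmless absorption of the displacement by $\delta$ near the singularity at integers into the constant $C$) satisfies the stated pointwise inequality. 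The Fourier coefficients factor as $b_{H_0}(m)=\widehat g(m)\,\widehat{\rho_\delta}(m)$. The uniform bound $|\widehat{\rho_\delta}(m)|\leq\|\rho_\delta\|_1=1$ combined with $|\widehat g(m)|\leq\int_0^1 g\ll(\log H_0)/H_0$ gives $b_{H_0}(m)\ll(\log H_0)/H_0$ for all $m$. For $|m|>H_0^{1+\varepsilon}$, $k$-fold integration by parts against $\rho_\delta$ produces $|\widehat{\rho_\delta}(m)|\ll_k (\delta|m|)^{-k}\leq H_0^{-k\varepsilon}$; summing over $m$ and choosing $k$ large in terms of $A$ and $\varepsilon$ yields $\sum_{|m|>H_0^{1+\varepsilon}}|b_{H_0}(m)|\ll H_0^{-A}$.

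The main obstacle is calibrating the mollification scale $\delta$: it must be small enough that $\widehat{\rho_\delta}$ decays genuinely by the time $|m|$ reaches $H_0^{1+\varepsilon}$, yet large enough that convolution with $\rho_\delta$ neither destroys pointwise domination of the remainder nor inflates $\int_0^1 g\,dt$ beyond order $(\log H_0)/H_0$. The choice $\delta\asymp H_0^{-1-\varepsilon}$ threads this needle, and the logarithmic factor in the sup bound is essentially forced by the logarithmic divergence of $\int\min(1,1/(H_0\|t\|))\,dt$ near the origin.
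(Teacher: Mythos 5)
You give no comparison target here, since the paper's ``proof'' is just a citation to \cite{Tolev1}; a self-contained construction is therefore welcome, and the architecture you choose --- the truncated expansion of $\psi$ with pointwise error $\ll\min\bigl(1,1/(H_0\|t\|)\bigr)$ (where $\|t\|$ is the distance to the nearest integer), a periodic majorant $g$ of that error, and convolution with a $C^\infty$ bump at scale $\delta$ --- is the standard and correct one. The non-negativity, smoothness, periodicity, the identity $b_{H_0}(m)=\widehat g(m)\widehat{\rho_\delta}(m)$, and the uniform bound $b_{H_0}(m)\ll\|g\|_{1}\ll(\log H_0)/H_0$ all go through exactly as you say. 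But the step you yourself single out as the crux --- calibrating $\delta$ --- is carried out incorrectly, and with your choice $\delta=H_0^{-1-\varepsilon}$ the tail estimate fails. Integration by parts does give $|\widehat{\rho_\delta}(m)|\ll_k(\delta|m|)^{-k}$, but for $|m|$ just above the threshold $H_0^{1+\varepsilon}$ one has $\delta|m|$ only barely larger than $1$, so $(\delta|m|)^{-k}$ is of order $1$; your asserted inequality $(\delta|m|)^{-k}\leq H_0^{-k\varepsilon}$ would require $|m|\geq H_0^{1+2\varepsilon}$. Summing honestly over dyadic blocks, $\sum_{|m|>H_0^{1+\varepsilon}}(\delta|m|)^{-k}\asymp\delta^{-k}\bigl(H_0^{1+\varepsilon}\bigr)^{1-k}=H_0^{1+\varepsilon}$, so your argument bounds the tail of $\sum|b_{H_0}(m)|$ only by $H_0^{\varepsilon}\log H_0$, nowhere near $H_0^{-A}$. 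The point is that the bump must already be oscillation-resolving at frequency $H_0^{1+\varepsilon}$ with a genuine power to spare, which forces $\delta\gg H_0^{-1-\varepsilon+c}$ for some fixed $c>0$.

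The repair is a one-line recalibration: take $\delta=H_0^{-1}$ (or $H_0^{-1-\varepsilon/2}$). Then for any $k\geq2$ one gets $\sum_{|m|>H_0^{1+\varepsilon}}|b_{H_0}(m)|\ll\|g\|_1\,\delta^{-k}\bigl(H_0^{1+\varepsilon}\bigr)^{1-k}\ll H_0^{\varepsilon-k\varepsilon}\log H_0$, which is $\ll H_0^{-A}$ once $k>(A+1+\varepsilon)/\varepsilon$; and the pointwise domination survives because $g*\rho_\delta(t)\geq\min_{|s|\leq\delta}g(t-s)\geq C\min\bigl(1,1/(H_0(\|t\|+\delta))\bigr)\geq g(t)/(1+H_0\delta)\geq g(t)/2$, so doubling $C$ restores the majorization. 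With that single change your proof is complete and correct.
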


\section{Proof of the theorem}
\indent

Using \eqref{GammaH} and the well-known identity $\mu^2(n)=\sum_{d^2|n}\mu(d)$ we get
\begin{equation}\label{GammaHdecomp}
\Gamma(H)=\sum\limits_{d_1, d_2\atop{(d_1, d_2)=1}}\mu(d_1)\mu(d_2)
\sum\limits_{1\leq x, y\leq H\atop{x^2+y^2+1\equiv 0\,(d_1^2)\atop{x^2+y^2+2\equiv 0\,(d_2^2)}}}1
=\Gamma_1(H)+\Gamma_2(H)\,,
\end{equation}
where
\begin{align}
\label{GammaH1}
&\Gamma_1(H)=\sum\limits_{d_1 d_2\leq z\atop{(d_1, d_2)=1}}\mu(d_1)\mu(d_2)\Sigma(H, d_1^2, d_2^2)\,,\\
\label{GammaH2}
&\Gamma_2(H)=\sum\limits_{d_1 d_2>z\atop{(d_1, d_2)=1}}\mu(d_1)\mu(d_2)\Sigma(H, d_1^2, d_2^2)\,,\\
\label{Sigma}
&\Sigma(H, d_1^2, d_2^2)=\sum\limits_{1\leq x, y\leq H\atop{x^2+y^2+1\equiv 0\,(d_1^2)\atop{x^2+y^2+2\equiv 0\,(d_2^2)}}}1\,,\\
\label{z}
&\sqrt{H}\leq z\leq H\,.
\end{align}

\textbf{Estimation of} $\mathbf{\Gamma_1(H)}$

At the estimation of $\Gamma_1(H)$ we will suppose that $q_1=d_1^2$, $q_2=d_2^2$, where
$d_1$ and $d_2$ are square-free, $(q_1, q_2)=1$ and $d_1 d_2\leq z$.

Denote
\begin{equation}\label{Omega}
\Omega(H, q_1, q_2, x)=\sum\limits_{h\leq H\atop{h\equiv x\,(q_1 q_2)}}1\,.
\end{equation}
Apparently
\begin{equation}\label{Omegaest}
\Omega(H, q_1, q_2, x)=Hq^{-1}_1 q^{-1}_2+\mathcal{O}(1)\,.
\end{equation}
Using \eqref{conditions}, \eqref{Sigma} and \eqref{Omega} we obtain
\begin{equation}\label{SigmaOmega}
\Sigma(H, q_1, q_2)=\sum\limits_{x, y\,:\,\eqref{conditions}}
\Omega(H, q_1, q_2, x)\,\Omega(H, q_1, q_2, y)\,.
\end{equation}
On the other hand \eqref{Omega} gives us
\begin{equation}\label{Omegapsi}
\Omega(H, q_1, q_2, y)=\left[\frac{H-y}{q_1 q_2}\right]-\left[\frac{-y}{q_1 q_2}\right]
=\frac{H}{q_1 q_2}+\psi\left(\frac{-y}{q_1 q_2}\right)
-\psi\left(\frac{H-y}{q_1 q_2}\right)\,.
\end{equation}
From \eqref{SigmaOmega} and \eqref{Omegapsi} we find
\begin{equation}\label{Sigmaest1}
\Sigma(H, q_1, q_2)=\sum\limits_{x, y\,:\,\eqref{conditions}}
\Omega(H, q_1, q_2, x)\left(\frac{H}{q_1 q_2}
-\psi\left(\frac{H-y}{q_1 q_2}\right)\right)+\Sigma'\,,
\end{equation}
where
\begin{equation*}
\Sigma'=\sum\limits_{x, y\,:\,\eqref{conditions}}
\Omega(H, q_1, q_2, x)\,\psi\left(\frac{-y}{q_1 q_2}\right)\,.
\end{equation*}
We shall estimate the sum $\Sigma'$. For the purpose we decompose it as
\begin{equation}\label{Sigma'decomp}
\Sigma'=\Sigma''+\Sigma'''\,,
\end{equation}
where
\begin{align}
\label{Sigma''}
&\Sigma''=\sum\limits_{1\leq x\leq q_1q_2\atop{x^2+1\equiv0\,(q_1)
\atop{x^2+2\equiv0\,(q_2)}}}\Omega(H, q_1, q_2, x)
\sum\limits_{1\leq y\leq q_1q_2\atop{y^2\equiv0\,(q_1 q_2)}}
\psi\left(\frac{-y}{q_1 q_2}\right)\,,\\
\label{Sigma'''}
&\Sigma'''=\sum\limits_{1\leq x\leq q_1q_2\atop{x^2+1\not\equiv0\,(q_1)
\atop{x^2+2\not\equiv0\,(q_2)}}}\Omega(H, q_1, q_2, x)
\sum\limits_{1\leq y\leq q_1q_2\atop{y^2\equiv-x^2-1\,(q_1)
\atop{y^2\equiv-x^2-2\,(q_2)}}}
\psi\left(\frac{-y}{q_1 q_2}\right)\,.
\end{align}

Firstly we consider the sum $\Sigma'''$.
We note that the sum over $y$ in  \eqref{Sigma'''} does not contain terms with
$y=\frac{q_1 q_2}{2}$ and $y=q_1 q_2$ .
Moreover for any $y$ satisfying the congruences and such that
$1\leq y<\frac{q_1 q_2}{2}$ the number $q_1 q_2-y$ satisfies the same congruences
and we have $\psi\left(\frac{-y}{q_1 q_2}\right)+\psi\left(\frac{-(q_1 q_2-y)}{q_1 q_2}\right)=0$.
Bearing in mind these arguments for the sum $\Sigma'''$ denoted by \eqref{Sigma'''}
we have that
\begin{equation}\label{Sigma'''est}
\Sigma'''=0\,.
\end{equation}

Next we consider the sum $\Sigma''$.
According to the above considerations
the sum over $y$ in \eqref{Sigma''} reduces to a sum with at most two terms
(corresponding to $y = \frac{q_1 q_2}{2}$ and $y = q_1 q_2)$.
Therefore
\begin{equation}\label{Sigma''est1}
\Sigma''\ll\sum\limits_{1\leq x\leq q_1q_2\atop{x^2+1\equiv0\,(q_1)
\atop{x^2+2\equiv0\,(q_2)}}}\Omega(H, q_1, q_2, x)\,.
\end{equation}
Now taking into account \eqref{Omegaest}, \eqref{Sigma''est1}, Chinese remainder theorem
and that the number of solutions of the congruence $x^2\equiv a\,(q_1q_2)$
equals to $\mathcal{O}((q_1q_2)^\varepsilon)=\mathcal{O}(H^\varepsilon)$ we get
\begin{equation}\label{Sigma''est2}
\Sigma''\ll \sum\limits_{1\leq x\leq q_1q_2\atop{x^2\equiv a\,(q_1q_2)}}\Omega(H, q_1, q_2, x)
\ll H^\varepsilon\Big(Hq^{-1}_1 q^{-1}_2+1\Big)\,.
\end{equation}
Here $a$ that depends on $q_1$ and $q_2$.\\
From \eqref{Sigma'decomp}, \eqref{Sigma'''est} and \eqref{Sigma''est2} it follows
\begin{equation}\label{Sigma'est}
\Sigma'\ll H^\varepsilon\Big(Hq^{-1}_1 q^{-1}_2+1\Big)\,.
\end{equation}
By \eqref{Sigmaest1} and \eqref{Sigma'est} we obtain
\begin{align}\label{Sigmaest2}
\Sigma(H, q_1, q_2)=&\sum\limits_{x, y\,:\,\eqref{conditions}}
\Omega(H, q_1, q_2, x)\left(\frac{H}{q_1 q_2}
-\psi\left(\frac{H-y}{q_1 q_2}\right)\right)\nonumber\\
&+\mathcal{O}\bigg(H^\varepsilon\Big(Hq^{-1}_1 q^{-1}_2+1\Big)\bigg)\,.
\end{align}
Proceeding in the same way with the sum $\Omega(H, q_1, q_2, x)$  we find
\begin{align}\label{Sigmaest3}
\Sigma(H, q_1, q_2)=&\sum\limits_{x, y\,:\,\eqref{conditions}}
\left(\frac{H}{q_1 q_2}-\psi\left(\frac{H-x}{q_1 q_2}\right)\right)
\left(\frac{H}{q_1 q_2}-\psi\left(\frac{H-y}{q_1 q_2}\right)\right)\nonumber\\
&+\mathcal{O}\bigg(H^\varepsilon\Big(Hq^{-1}_1 q^{-1}_2+1\Big)\bigg)\,.
\end{align}
Bearing in mind \eqref{lambdaq1q2mn}, \eqref{lambdaq1q2}
and \eqref{Sigmaest3} we get
\begin{align}\label{Sigmaest4}
\Sigma(H, q_1, q_2)&=\frac{H^2 \lambda(q_1, q_2)}{q^2_1 q^2_2}
-2\frac{H}{q_1 q_2}\Sigma_1(H, q_1, q_2)\nonumber\\
&+\Sigma_2(H, q_1, q_2)
+\mathcal{O}\bigg(H^\varepsilon\Big(Hq^{-1}_1 q^{-1}_2+1\Big)\bigg)\,,
\end{align}
where
\begin{align}
\label{Sigma1}
&\Sigma_1(H, q_1, q_2)=\sum\limits_{x, y\,:\,\eqref{conditions}}
\psi\left(\frac{H-x}{q_1 q_2}\right)\,,\\
\label{Sigma2}
&\Sigma_2(H, q_1, q_2)=\sum\limits_{x, y\,:\,\eqref{conditions}}
\psi\left(\frac{H-x}{q_1 q_2}\right)\, \psi\left(\frac{H-y}{q_1 q_2}\right)\,.
\end{align}

Firstly we consider the sum $\Sigma_1(H, q_1, q_2)$.
Using  \eqref{lambdaq1q2mn} and Lemma \ref{expansion} with $H_0=H$ we obtain
\begin{equation}\label{Sigma1decomp}
\Sigma_1(H, q_1, q_2)=\Sigma'_1(H, q_1, q_2)
+\mathcal{O}\Big(\Sigma''_1(H, q_1, q_2)\Big)\,,
\end{equation}
where
\begin{align}
\label{Sigma'1}
\Sigma'_1(H, q_1, q_2)&=-\sum\limits_{x, y\,:\,\eqref{conditions}}
\sum\limits_{1\leq|m|\leq H}
\frac{e\left(m\left(\frac{H-x}{q_1 q_2}\right)\right)}{2\pi i m}\nonumber\\
&=-\sum\limits_{1\leq|m|\leq H}
\frac{e\left(\frac{m H}{q_1 q_2}\right)\lambda(q_1, q_2, -m, 0)}{2\pi i m}\,,\\
\label{Sigma''1}
\Sigma''_1(H, q_1, q_2)&=\sum\limits_{x, y\,:\,\eqref{conditions}}
f\left(H,\frac{H-x}{q_1 q_2}\right)\,.
\end{align}
Formula  \eqref{Sigma'1} and Lemma \ref{Mylemma2} give us
\begin{equation}\label{Sigma'1est}
\Sigma'_1(H, q_1, q_2)\ll H^\varepsilon\, (q_1 q_2)^{\frac{1}{2}} \,.
\end{equation}
In order to estimate $\Sigma''_1(H, q_1, q_2)$  we use \eqref{lambdaq1q2mn}, \eqref{lambdaq1q2},
\eqref{Sigma''1} and Lemmas \ref{Mylemma1}, \ref{Mylemma2},  \ref{expansion}
and get
\begin{align}
\label{Sigma''1est}
\Sigma''_1(H, q_1, q_2)&=\sum\limits_{x, y\,:\,\eqref{conditions}}
\Bigg(b_H(0)+  \sum\limits_{1\leq|m|\leq H^{1+\varepsilon}}
b_H(m)\, e\left(m\left(\frac{H-x}{q_1 q_2}\right)\right)\Bigg)+\mathcal{O}(1)\nonumber\\
&=b_H(0)\, \lambda(q_1, q_2)+  \sum\limits_{1\leq|m|\leq H^{1+\varepsilon}}
b_H(m)\, e\left(\frac{mH}{q_1 q_2}\right)\lambda(q_1, q_2, -m, 0)+\mathcal{O}(1)\nonumber\\
&\ll H^{\varepsilon-1} \, q_1 q_2 +1+ H^{\varepsilon-1}\sum\limits_{1\leq|m|\leq H^{1+\varepsilon}}|\lambda(q_1, q_2, -m, 0)|\nonumber\\
&\ll H^{\varepsilon-1} \, q_1 q_2 +1+ H^\varepsilon\sum\limits_{1\leq m\leq H^{1+\varepsilon}}\frac{|\lambda(q_1, q_2, m, 0)|}{m}\nonumber\\
&\ll H^{\varepsilon-1} \, q_1 q_2 +H^\varepsilon \, (q_1q_2)^{\frac{1}{2}}\,.
\end{align}
From \eqref{Sigma1decomp}, \eqref{Sigma'1est} and \eqref{Sigma''1est}
it follows
\begin{equation}\label{Sigma1est}
\Sigma_1(H, q_1, q_2)\ll H^{\varepsilon-1} \, q_1 q_2
+H^\varepsilon \, (q_1q_2)^{\frac{1}{2}}\,.
\end{equation}

Next we consider the sum $\Sigma_2(H, q_1, q_2)$.
Bearing in mind \eqref{Sigma2}, \eqref{Sigma''1}, \eqref{Sigma''1est}
and Lemmas \ref{Mylemma2},  \ref{expansion} we find
\begin{align}
\label{Sigma2est}
\Sigma_2(H, q_1, q_2)&=\sum\limits_{x, y\,:\,\eqref{conditions}}
\sum\limits_{1\leq |m|, |n| \leq H}
\frac{e\left(\frac{(m+n)H}{q_1 q_2}\right)\, e\left(-\frac{m x+n y}{q_1 q_2}\right)}
{(2\pi i)^2 m n}+\mathcal{O}\Big(H^\varepsilon \Sigma''_1(H, q_1, q_2)\Big)\nonumber\\
&=\sum\limits_{1\leq |m|, |n| \leq H}
\frac{e\left(\frac{(m+n)H}{q_1 q_2}\right)}{(2\pi i)^2 m n}\lambda(q_1, q_2, -m, -n)
+\mathcal{O}\Big(H^{\varepsilon-1} \, q_1 q_2 +H^\varepsilon \, (q_1q_2)^{\frac{1}{2}}\Big)\nonumber\\
&\ll\sum\limits_{1\leq |m|, |n| \leq H}
\frac{|\lambda(q_1, q_2, m, n)|}{|m n|}
+H^{\varepsilon-1} \, q_1 q_2 +H^\varepsilon \, (q_1q_2)^{\frac{1}{2}}\nonumber\\
&\ll H^{\varepsilon-1} \, q_1 q_2 +H^\varepsilon \, (q_1q_2)^{\frac{1}{2}}\,.
\end{align}
Taking into account \eqref{Sigmaest4}, \eqref{Sigma1est} and \eqref{Sigma2est}
we get
\begin{equation}\label{Sigmaest5}
\Sigma(H, q_1, q_2)=H^2 \frac{ \lambda(q_1, q_2)}{q^2_1 q^2_2}
+\mathcal{O}\bigg(H^\varepsilon\Big(Hq^{-\frac{1}{2}}_1 q^{-\frac{1}{2}}_2
+q^{\frac{1}{2}}_1 q^{\frac{1}{2}}_2+H^{-1} q_1 q_2\Big)\bigg)\,.
\end{equation}
From \eqref{GammaH1}, \eqref{z} and \eqref{Sigmaest5} we obtain
\begin{align}\label{GammaH1est1}
\Gamma_1(H)&=H^2 \sum\limits_{d_1 d_2\leq z\atop{(d_1, d_2)=1}}
 \frac{\mu(d_1)\mu(d_2) \lambda(d^2_1, d^2_2)}{d^4_1 d^4_2}
+\mathcal{O}\big(H^\varepsilon z^2\big)\nonumber\\
&=\sigma H^2-H^2 \sum\limits_{d_1 d_2>z\atop{(d_1, d_2)=1}}
 \frac{\mu(d_1)\mu(d_2) \lambda(d^2_1, d^2_2)}{d^4_1 d^4_2}
+\mathcal{O}\big(H^\varepsilon z^2\big)\,,
\end{align}
where
\begin{equation}\label{sigmasum}
\sigma=\sum\limits_{d_1, d_2=1\atop{(d_1, d_2)=1}}^\infty
 \frac{\mu(d_1)\mu(d_2)\lambda(d^2_1, d^2_2)}{d^4_1 d^4_2}\,.
\end{equation}
Using \eqref{Myestimate2} we find
\begin{equation}\label{d1d2>est}
\sum\limits_{d_1 d_2>z\atop{(d_1, d_2)=1}}\frac{\mu(d_1)\mu(d_2)\lambda(d^2_1, d^2_2)}{d^4_1 d^4_2}
\ll\sum\limits_{d_1 d_2>z\atop{(d_1, d_2)=1}}\frac{(d_1 d_2)^{2+\varepsilon}}{(d_1 d_2)^4}
\ll\sum\limits_{n>z}\frac{\tau(n)}{n^{2-\varepsilon}}\ll z^{\varepsilon-1}\,.
\end{equation}
It remains to see that the product  \eqref{sigmaproduct} and the sum \eqref{sigmasum} coincide.
From the definition \eqref{lambdaq1q2}, the property \eqref{multiplicative} and  $(d_1, d_2)=1$  it follows
\begin{equation}\label{lambdad1d2}
\lambda(d^2_1, d^2_2)=\lambda(d^2_1, 1) \lambda(1, d^2_2)\,.
\end{equation}
Bearing in mind \eqref{sigmasum} and \eqref{lambdad1d2} we get
\begin{equation}\label{sigmasumest1}
\sigma=\sum\limits_{d_1=1}^\infty\frac{\mu(d_1)\lambda(d^2_1, 1)}{d_1^4}
\sum\limits_{d_2=1}^\infty\frac{\mu(d_2)\lambda(1, d^2_2)}{d_2^4}f_{d_1}(d_2)\,,
\end{equation}
where
\begin{equation*}
f_{d_1}(d_2)=\begin{cases}1 \;\; \text{ if }\; (d_1, d_2)=1\,,\\
0 \;\; \mbox{ if } \; (d_1, d_2)>1\,.
\end{cases}
\end{equation*}
Clearly the function
\begin{equation*}
\frac{\mu(d_2)\lambda(1, d^2_2)}{d_2^4}f_{d_1}(d_2)
\end{equation*}
is multiplicative with respect to $d_2$ and the series
\begin{equation*}
\sum\limits_{d_2=1}^\infty\frac{\mu(d_2)\lambda(1, d^2_2)}{d_2^4}f_{d_1}(d_2)
\end{equation*}
is absolutely convergent.

Applying the Euler product we obtain
\begin{align}\label{Eulerproduct}
\sum\limits_{d_2=1}^\infty\frac{\mu(d_2)\lambda(1, d^2_2)}{d_2^4}f_{d_1}(d_2)&=
\prod\limits_{p\nmid d_1}\left(1-\frac{\lambda(1, p^2)}{p^4}\right)\nonumber\\
&=\prod\limits_{p}\left(1-\frac{\lambda(1, p^2)}{p^4}\right)
\prod\limits_{p|d_1}\left(1-\frac{\lambda(1, p^2)}{p^4}\right)^{-1}\,.
\end{align}
From \eqref{sigmasumest1} and \eqref{Eulerproduct} it follows
\begin{align}\label{sigmasumest2}
\sigma&=\sum\limits_{d_1=1}^\infty\frac{\mu(d_1)\lambda(d^2_1, 1)}{d_1^4}
\prod\limits_{p}\left(1-\frac{\lambda(1, p^2)}{p^4}\right)
\prod\limits_{p|d_1}\left(1-\frac{\lambda(1, p^2)}{p^4}\right)^{-1}\nonumber\\
&=\prod\limits_{p}\left(1-\frac{\lambda(1, p^2)}{p^4}\right)
\sum\limits_{d_1=1}^\infty\frac{\mu(d_1)\lambda(d^2_1, 1)}{d_1^4}
\prod\limits_{p|d_1}\left(1-\frac{\lambda(1, p^2)}{p^4}\right)^{-1}\,.
\end{align}
Obviously  the function
\begin{equation*}
\frac{\mu(d_1)\lambda(d^2_1, 1)}{d_1^4}
\prod\limits_{p|d_1}\left(1-\frac{\lambda(1, p^2)}{p^4}\right)^{-1}
\end{equation*}
is multiplicative with respect to $d_1$ and the series
\begin{equation*}
\sum\limits_{d_1=1}^\infty\frac{\mu(d_1)\lambda(d^2_1, 1)}{d_1^4}
\prod\limits_{p|d_1}\left(1-\frac{\lambda(1, p^2)}{p^4}\right)^{-1}
\end{equation*}
is absolutely convergent.

Applying again the Euler product from \eqref{sigmasumest2} we find
\begin{align}\label{sigmasumest3}
\sigma&=
\prod\limits_{p}\left(1-\frac{\lambda(1, p^2)}{p^4}\right)
\prod\limits_{p}\left(1-\frac{\lambda(p^2, 1)}{p^4}\left(1-\frac{\lambda(1, p^2)}{p^4}\right)^{-1}\right)\nonumber\\
&=\prod\limits_{p}\left(1-\frac{\lambda(p^2, 1)+\lambda(1, p^2)}{p^4}\right)\,.
\end{align}
Bearing in mind \eqref{z}, \eqref{GammaH1est1}, \eqref{d1d2>est} and \eqref{sigmasumest3} we get
\begin{equation}\label{GammaH1est2}
\Gamma_1(H)=\sigma H^2+\mathcal{O}\Big(H^\varepsilon\big(z^2+H^2z^{-1}\big)\Big)\,,
\end{equation}
where $\sigma$ is given by the product \eqref{sigmaproduct}.

\textbf{Estimation of} $\mathbf{\Gamma_2(H)}$

Using \eqref{GammaH2} we write
\begin{equation}\label{GammaH2est1}
|\Gamma_2(H)|
\ll(\log H)^2\sum\limits_{D_1\leq d_1<2D_1}\sum\limits_{D_2\leq d_2<2D_2}
\sum\limits_{k\leq(2H^2+1)d_1^{-2}\atop{kd_1^2+1\equiv0\,(d_2^2)}}
\sum\limits_{1\leq x, y\leq H\atop{x^2+y^2=kd_1^2-1}}1\,,
\end{equation}
where
\begin{equation}\label{DT}
\frac{1}{2}\leq D_1, D_2\leq\sqrt{2H^2+2}\,,\quad D_1D_2\geq\frac{z}{4}\,.
\end{equation}
On the one hand \eqref{GammaH2est1} give us
\begin{align}\label{GammaH2est2}
|\Gamma_2(H)|&\ll H^\varepsilon\sum\limits_{D_1\leq d_1<2D_1}
\sum\limits_{k\leq(2H^2+1)D_1^{-2}}\sum\limits_{D_2\leq d_2<2D_2}
\sum\limits_{l\leq(2H^2+2)D_2^{-2}\atop{kd_1^2+1=ld_2^2}}1\nonumber\\
&\ll  H^\varepsilon\sum\limits_{D_1\leq d_1<2D_1}
\sum\limits_{k\leq(2H^2+1)D_1^{-2}}\tau(kd_1^2+1)\nonumber\\
&\ll H^\varepsilon\sum\limits_{D_1\leq d_1<2D_1}
\sum\limits_{k\leq(2H^2+1)D_1^{-2}}1\nonumber\\
&\ll H^{2+\varepsilon}D_1^{-1}\,.
\end{align}
On the other hand \eqref{GammaH2est1} implies
\begin{align}\label{GammaH2est3}
|\Gamma_2(H)|&\ll H^\varepsilon\sum\limits_{D_2\leq d_2<2D_2}
\sum\limits_{l\leq(2H^2+2)D_2^{-2}}\sum\limits_{D_1\leq d_1<2D_1}
\sum\limits_{k\leq(2H^2+1)D_1^{-2}\atop{kd_1^2=ld_2^2-1}}1\nonumber\\
&\ll H^\varepsilon\sum\limits_{D_2\leq d_2<2D_2}
\sum\limits_{l\leq(2H^2+2)D_2^{-2}}\tau(ld_2^2-1)\nonumber\\
&\ll H^\varepsilon\sum\limits_{D_2\leq d_2<2D_2}
\sum\limits_{l\leq(2H^2+2)D_2^{-2}}1\nonumber\\
&\ll H^{2+\varepsilon}D_2^{-1}\,.
\end{align}
By \eqref{DT} -- \eqref{GammaH2est3} it follows
\begin{equation}\label{GammaH2est4}
|\Gamma_2(H)|\ll  H^{2+\varepsilon}z^{-\frac{1}{2}}\,.
\end{equation}

\textbf{The end of the proof}

Bearing in mind \eqref{GammaHdecomp}, \eqref{GammaH1est2}, \eqref{GammaH2est4}
and choosing $z=H^{\frac{4}{5}}$ we obtain the asymptotic formula
\eqref{asymptoticformula}.

The theorem is proved.
\vspace{5mm}

\textbf{Acknowledgments.}
The author is especially grateful to Prof. Tolev for numerous directions
and advices for five years.
The door to Prof. Tolev office
was always open whenever I had a question about my research.
This paper is dedicated to Tolev's birthday anniversary.

\vskip20pt
\footnotesize
\begin{flushleft}
S. I. Dimitrov\\
Faculty of Applied Mathematics and Informatics\\
Technical University of Sofia \\
8, St.Kliment Ohridski Blvd. \\
1756 Sofia, BULGARIA\\
e-mail: sdimitrov@tu-sofia.bg\\
\end{flushleft}
\end{document}